\documentclass{article}      

\usepackage{graphicx}
\usepackage{amsmath}
\usepackage{hyperref}
\usepackage{url}
\usepackage{amssymb}
\usepackage{amsthm}

\newtheorem{theorem}{Theorem}[section]

\newtheorem{lemma}[theorem]{Lemma}
\newtheorem{proposition}[theorem]{Proposition}

\newtheorem{remark}[theorem]{Remark}

\def\Cond{\mathop{\rm Cond}}

\def\diag{\mathop{\rm diag}}
\usepackage{graphicx}

\title{Optimal interval length for the collocation of the Newton basis}

\author{J. M. Carnicer      \and
          Y. Khiar \and 
          J. M. Pe{\~{n}}a\thanks{This work has been partially supported by the Spanish Research Grant MTM2015-65433-P (MINECO/FEDER), by Gobierno the Arag\'on and Fondo Social Europeo.} \\
\small Departamento de Matem\'{a}tica Aplicada/IUMA, Universidad de Zaragoza \\ 
\small 50009 Zaragoza, Spain \\%etc.
}

\begin{document}
\maketitle
\begin{abstract}
It is known that the Lagrange interpolation problem at equidistant nodes is ill-conditioned. We explore the influence of the interval length in the computation of divided differences of the Newton interpolation formula. Condition numbers are computed for lower triangular matrices associated to the Newton interpolation formula at equidistant nodes. We consider the collocation matrices $L$ and $P_L$ of the monic Newton basis and a normalized Newton basis, so that $P_L$ is the lower triangular Pascal matrix. In contrast to $L$, $P_L$ does not depend on the interval length, and we show that the Skeel condition number of the $(n+1)\times (n+1)$ lower triangular Pascal  matrix is $3^n$. The $\infty$-norm condition number of the collocation matrix $L$ of the monic Newton basis is computed in terms of the interval length. The minimum asymptotic growth rate is achieved for intervals of length 3.
\end{abstract}
\noindent {\bf Keywords:} Newton interpolation formula; Divided differences; Condition number; Pascal matrix

\noindent{\bf MSC:} 41A05, 65F35, 15A12

\section{Introduction}
\label{intro}
Divided differences can be used for obtaining approximations of the derivatives of a function, leading to numerical differentation formulae. In order to study the stability of the computation of divided differences arising in the Newton interpolation formula at equidistant nodes, we consider the behavior of the Newton basis at the nodes by means of the corresponding lower triangular collocation matrix. This paper studies the conditioning of these matrices. In particular, the collocation matrix $L$ of the monic Newton basis and the lower triangular Pascal matrix $P_L$ are considered. The traditional  and the Skeel condition numbers are used. It is proved that the Skeel condition number of the $(n+1)\times (n+1)$ lower triangular Pascal matrix is $3^n$. The $\infty$-norm condition number of $L$ is also obtained and its asymptotic behavior in terms of the interval length is analyzed and we show that the optimal interval length is 3. Analogous results for the collocation of the monomial basis and the conditioning of Vandermonde matrices can be found in \cite{Gautschi75}.

This paper considers the propagation of errors in the computation of divided differences in interpolation problems with equidistant nodes. On the one hand, interpolation at equidistant nodes is unstable (see Section 5 of \cite{JATckp}), illustrated by the Runge phenomenon (cf. \cite{Runge}). On the other hand, interpolation at equidistant nodes arises in practice when dealing with experimental data, tables of functions, difference equations and numerical integration with fixed stepsize. Moreover, the analysis of interpolation with equidistant nodes is a classical issue in Approximation Theory (cf. \S 4 of \cite{Steffensen}). As a consequence of the instability of the Lagrange interpolation problem at equidistant nodes, the computation of the divided differences is also unstable. We shall study the influence of the scaling and the interval length on this instability.

Section \ref{sec:2} presents basic concepts and notations as well as auxiliary results. The lower triangular matrices $L$ and $P_L$ are related with the computation of the divided and finite differences corresponding to two different forms of the Newton formula, based on different scaling. In Section \ref{sec:3}, the Skeel condition number of the lower triangular Pascal matrix $P_L$ is obtained. In Section \ref{sec:4}, $\kappa_\infty(L)$, the $\infty$-norm condition number of $L$, is given in terms of the interval length. Numerical examples are included. The asymptotic behavior of $\kappa_\infty(L)$, as the degree of the interpolant tends to infinity, is analyzed in Section \ref{sec:5}. It is shown that the interval length corresponding to a minimum asymptotic growth rate equals 3. Comparisons with the asymptotic behavior of the conditioning of $P_L$ are also performed.

%%%%%%%%%%
\section{Basic notations and auxiliary results}
\label{sec:2}
Condition numbers measure the sensitivity of the solution of a linear system with respect to the perturbations of the data. For a given  matrix $A=(a_{ij})_{i,j=0,\dots,n}$, we shall denote by $|A|:=(|a_{ij}|)_{i,j=0,\dots,n}$ the matrix whose entries are the absolute values of the corresponding entries of $A$. 

The \emph{Skeel condition number} of a nonsingular matrix $A$ is given by
\begin{equation}\label{skeelcondition}
\Cond(A):=||\,|A^{-1}|\,|A|\,||_\infty.
\end{equation}

The usual $\infty$-norm condition number of a nonsingular matrix is defined as
\begin{equation}\label{conditionnumber}
\kappa_\infty(A):=||A||_\infty||A^{-1}||_\infty.
\end{equation}
By the submultiplicative property of the $\infty$-norm, we derive 
\begin{equation}\label{condkappa}
\Cond(A)\leq\kappa_\infty(A),
\end{equation}
so that the Skeel condition number gives lower bounds than the traditional condition number. Another feature of the Skeel condition number is that it is invariant under row scaling (see Section 7.2 of \cite{HighamBook}).

We want to analyze the conditioning of linear systems arising in the polynomial Lagrange interpolation problem. Given a function $f\in C[a,b]$ and distinct interpolation nodes $x_0,\dots,x_n$, there exists a unique polynomial $p$ in $P_n$, the space of polynomials with degree not greater than $n$,  such that $p(x_i)=f(x_i)$, $i=0,\dots,n$, called the Lagrange interpolation polynomial. 

The coefficients of the interpolation polynomial with respect to a basis can be computed by solving a linear system of equations, where some computations can be performed with high relative accuracy  (see \cite{MarcoMartinez}). An explicit solution of the polynomial interpolant is given by the Lagrange interpolation formula. In particular, the barycentric form of the Lagrange formula is recommended due to its computational advantages (cf. \cite{Berrut}). The Lagrange interpolation polynomial can also be expressed by means of the Newton formula. The  nice properties of the Newton formula are well-known. For instance, it provides a correction of the interpolation when the number of data increases by adding simple terms where divided differences play an essential role. This property of the Newton formula is used to estimate practical error bounds. 

The Newton formula is given by
\begin{equation}\label{Newtondivided}
p(x)=\sum_{i=0}^n d_if\,\omega_i(x)
\end{equation}
where
\begin{equation}\label{difdiv}
d_if:=[x_0,\dots,x_i]f,\quad i=0,\dots,n,
\end{equation}
are the divided difference functionals and
\begin{equation}\label{omega}
\omega_0(x):=1,\qquad \omega_i(x):=(x-x_0)\cdots(x-x_{i-1}),\quad i=1,\dots,n+1.
\end{equation}
is the monic Newton basis. The coefficients are the divided differences, which play a crucial role in numerical differentation. Another form of the Newton formula, based on different scaling, is given by
\begin{equation}\label{Newtonfinite}
p(x)=\sum_{i=0}^n\tilde d_if\tilde\omega_i(x),
\end{equation}
where
\begin{equation}\label{diferenciafinita}
\tilde d_if:=\omega_i(x_i)d_if,
\end{equation}
are the finite difference functionals and 
\begin{equation}\label{omegatilda}
\tilde\omega_i(x)=\frac{\omega_i(x)}{\omega_i(x_i)},\quad i=0,\dots,n,
\end{equation}
is a normalized Newton basis, in the sense that $\tilde\omega_i(x_i)=1$ for all $i=0,\dots,n$.

Since the nodes are distinct, we have the following explicit formulae for the divided differences
\begin{equation}\label{divided-differences}
d_kf=\sum_{i=0}^k \frac{f(x_i)}{\omega_{k+1}'(x_i)},
\end{equation}
with 
\begin{equation}\label{omegaprima}
\omega_{k+1}'(x_i)=\prod_{j\in\{0,\dots,k\}\setminus\{i\}}(x_i-x_j).
\end{equation}
For the finite differences we have 
\begin{equation}\label{finite-differences}
\tilde d_kf=\sum_{i=0}^k\frac{\omega_k(x_k)}{\omega_{k+1}'(x_i)}f(x_i).
\end{equation}

We are concerned with the conditioning of the problem of computing the divided differences (resp. finite differences) for a given function $f$. We shall adopt a matrix approach. Let us define the vectors 
\begin{equation}\label{defvectors}
{\bf d}:=(d_0f,\dots,d_nf)^T,\quad \tilde{\bf  d}:=(\tilde d_0f,\dots,\tilde d_nf)^T,\quad {\bf f}:=(f(x_0),\dots,f(x_n))^T,
\end{equation}
and the collocation matrices 
\begin{equation}\label{f19}
L=(\omega_j(x_i))_{0\leq i,j\leq n},\quad \tilde L=(\tilde\omega_j(x_i))_{0\leq i,j\leq n}.
\end{equation}
Taking into account that $\omega_k(x_i)=0$ for $k>i$, we deduce that $L$ and $\tilde L$ are lower triangular matrices. Observe that the matrix $\tilde L$ has ones on the diagonal. Besides the matrix $\tilde L$ is invariant under affine transformation of the nodes because, by (\ref{omega}) and (\ref{omegatilda}),
\begin{equation}\label{prodomegatilda}
\tilde\omega_j(x_i)=\frac{\omega_j(x_i)}{\omega_j(x_j)}=\prod_{k=0}^{j-1}\frac{x_i-x_k}{x_j-x_k}.
\end{equation}
is a product of simple ratios of the nodes. 

The sensitivity of divided  differences has been analyzed by several authors in different contexts (cf. Section 5.3 and 5.5 of \cite{HighamBook}). From (\ref{Newtondivided}) and (\ref{Newtonfinite}), ${\bf d}$ and $\tilde {\bf d}$, the coefficients of the Newton formulae, are the solutions of the systems
\begin{equation}\label{system}
L {\bf d}={\bf f},\quad \tilde L \tilde{\bf d}={\bf f},
\end{equation}
respectively. On the other hand, $L$ (resp., $\tilde L$) is the matrix of change of basis between the Lagrange basis and the monic (resp.,  normalized) Newton basis. Thus, we are interested in the computation of the condition numbers of these matrices. Note that if the component $d_i$ of the vector of divided differences is computed with high relative error, this can be compensated if the corresponding factor $\omega_i(x)$ is sufficiently small. In practice, inaccurate computation of divided differences may still reproduce the interpolation polynomial well (see page 100 of \cite{HighamBook}).

From the system (\ref{system}), we obtain $L^{-1}{\bf f}={\bf d}$, and using (\ref{divided-differences}), we conclude that the entries of $L^{-1}$ are
\begin{equation}\label{inversaL}
l_{kj}^{(-1)}=\begin{cases}0,&\text{if } j>k, \\
 \frac{1}{\omega_{k+1}'(x_j)},& \text{if } j\leq k.\end{cases}
\end{equation}

From formula (\ref{omegatilda}), we obtain the relation between $L$ and $\tilde L$
\begin{equation}\label{relation}
\tilde L=LD, \quad D=\diag(1/\omega_0(x_0),\dots,1/\omega_n(x_n)).
\end{equation}

From now on we will consider equidistant nodes $x_0,\dots,x_n$ in an interval $[a,b]$ in increasing order, that is,
\begin{equation}\label{equidistant}
x_i=a+(b-a)\frac{i}{n},\quad i=0,\dots,n.
\end{equation}
In this particular case, by (\ref{omega}), the entries of $L=(l_{ij})_{0\leq i,j\leq n}$ of (\ref{f19}) for $j\leq i$ are given by
\begin{equation}\label{omegaeq}
	l_{ij}=\omega_j(x_i)=\prod_{k=0}^{j-1} (x_i-x_k)=\Big(\frac{b-a}{n}\Big)^j\prod_{k=0}^{j-1}(i-k)=\Big(\frac{b-a}{n}\Big)^j\frac{i!}{(i-j)!}.
\end{equation}

By (\ref{omegaprima}) and (\ref{inversaL}), we also have for $j\leq i$
\begin{equation}\label{omegaprimaeq}
l_{ij}^{(-1)}=\Big(\frac{n}{b-a}\Big)^i\frac{1}{\prod_{k\in\{0,\dots,i\}\setminus\{j\}}(j-k)}=(-1)^{i-j}\Big(\frac{n}{b-a}\Big)^i\frac{1}{j!(i-j)!}.
\end{equation}

%%%%%%%%%%
\section{Conditioning of  Pascal matrices}
\label{sec:3}
Recall that the lower triangular Pascal matrix $P_L:=(q_{ij})_{0\leq i,j\leq n}$ is given by (cf. \cite{AlonsoPena})
\begin{equation}\label{pascalq}
q_{ij}:=\dbinom{i}{j}.
\end{equation}
Pascal matrices play an important role in many fields (cf. \cite{Edelman},\cite{Lewis}) and its well-known (cf. \cite{Chen} and Example 6.1 of Chapter 3 of \cite{Karlin}) that they are totally positive  matrices, that is, all their minors are nonnegative.

\begin{remark}\label{remarkpascal}
For equidistant nodes (\ref{equidistant}), using (\ref{prodomegatilda}), we have that the entries of the collocation matrix $\tilde L=(\tilde\omega_j(x_i))_{0\leq i,j\leq n}$ are
\begin{equation}\label{entriesPL}
\tilde\omega_j(x_i)=\prod_{k=0}^{j-1}\frac{i-k}{j-k}=\frac{i(i-1)\cdots(i-j+1)}{j(j-1)\cdots1}=\dbinom{i}{j}.
\end{equation}
We conclude that the collocation matrix associated to the Newton representation with finite differences $\tilde L$ does not depend on the interval $[a,b]$ and coincides with the lower triangular Pascal matrix $P_L$, that is, $\tilde L=P_L$.
\end{remark}

By the relation (\ref{relation}), we have
\begin{equation}\label{lpl}
P_L^{-1}=D^{-1}L^{-1}.
\end{equation}
So, using  (\ref{omegaeq}) and (\ref{omegaprimaeq}) and the previous relation, the entries of the matrix $P_L^{-1}=(q_{ij}^{(-1)})_{0\leq i,j\leq n}$ are, for $j\leq i$,
\begin{equation}\label{entriesPLinv}
q_{ij}^{(-1)}=\frac{\omega_i(x_i)}{\omega'_{i+1}(x_j)}=(-1)^{i+j}\dbinom{i}{j}.
\end{equation}

In the following result we compute the Skeel condition number of the lower triangular Pascal matrix and of its inverse.

\begin{theorem}\label{prop8}
Let $P_L$ be the lower triangular Pascal matrix. Then
$$
\Cond(P_L)=\Cond(P_L^{-1})=3^n.
$$
\end{theorem}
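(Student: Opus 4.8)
The plan is to compute $|P_L^{-1}|\,|P_L|$ explicitly and then take its $\infty$-norm (the maximum absolute row sum). By Remark~\ref{remarkpascal} and~(\ref{entriesPLinv}) we have $|P_L| = (\binom{i}{j})_{0\le i,j\le n}$ and $|P_L^{-1}| = (\binom{i}{j})_{0\le i,j\le n}$ as well, since the signs in~(\ref{entriesPLinv}) disappear under absolute values. So $|P_L^{-1}|\,|P_L|$ is simply the square of the (entrywise-positive) lower triangular Pascal matrix. The $(i,k)$ entry is therefore $\sum_{j} \binom{i}{j}\binom{j}{k}$, where $j$ ranges over $k\le j\le i$. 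The first key step is the classical binomial identity $\sum_{j=k}^{i}\binom{i}{j}\binom{j}{k} = \binom{i}{k}2^{i-k}$ (the "subset-of-a-subset" or trinomial revision identity), which I would prove by the combinatorial argument of choosing a $k$-subset of an $i$-set in two stages, or by $\binom{i}{j}\binom{j}{k} = \binom{i}{k}\binom{i-k}{j-k}$ followed by the binomial theorem.

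With this in hand, the $\infty$-norm of $|P_L^{-1}|\,|P_L|$ is $\max_{0\le i\le n} \sum_{k=0}^{i} \binom{i}{k} 2^{i-k}$. The second step is to evaluate this row sum: by the binomial theorem $\sum_{k=0}^{i}\binom{i}{k}2^{i-k} = (1+2)^i = 3^i$. Since $3^i$ is increasing in $i$, the maximum over $i=0,\dots,n$ is attained at $i=n$, giving $\Cond(P_L)=3^n$. This settles the first equality.

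For $\Cond(P_L^{-1})$, the observation is that $|P_L^{-1}|\,|P_L| = |P_L|\,|P_L^{-1}|$ here because, as noted, $|P_L| = |P_L^{-1}| = P_L^{(\text{abs})}$ as matrices, so both products equal $\bigl(P_L^{(\text{abs})}\bigr)^2$ and have the same $\infty$-norm. Hence $\Cond(P_L^{-1}) = \|\,|(P_L^{-1})^{-1}|\,|P_L^{-1}|\,\|_\infty = \|\,|P_L|\,|P_L^{-1}|\,\|_\infty = \|\,|P_L^{-1}|\,|P_L|\,\|_\infty = 3^n$. Alternatively one can invoke that $P_L^{-1} = S P_L S$ with $S=\diag((-1)^i)$, so $|P_L^{-1}|=|P_L|$ trivially and the Skeel condition number, being defined through absolute-value matrices, is unchanged.

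I do not expect a serious obstacle here: the whole argument rests on the single combinatorial identity $\sum_{j=k}^{i}\binom{i}{j}\binom{j}{k}=\binom{i}{k}2^{i-k}$ and then a one-line binomial-theorem evaluation. The only point requiring a little care is bookkeeping the summation ranges (the Pascal matrix is lower triangular, so the product's $(i,k)$ entry only picks up terms with $k\le j\le i$, and vanishes for $k>i$), and confirming that taking $|\cdot|$ genuinely kills every sign in~(\ref{entriesPLinv}) so that no cancellation occurs in the matrix product — which it does, since all entries being multiplied are then nonnegative.
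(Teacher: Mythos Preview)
Your proof is correct and follows essentially the same approach as the paper: both use $|P_L|=|P_L^{-1}|$ to get $\Cond(P_L)=\Cond(P_L^{-1})$, then compute the row sums of $|P_L^{-1}|\,|P_L|$ via the trinomial revision identity $\binom{i}{j}\binom{j}{k}=\binom{i}{k}\binom{i-k}{j-k}$ and the binomial theorem to obtain $3^i$, with maximum $3^n$. The only cosmetic difference is that the paper writes the double sum directly and interchanges the order of summation, whereas you first identify the $(i,k)$ entry of the product and then sum over $k$.
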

\begin{proof}
By (\ref{pascalq}) and (\ref{entriesPLinv}), $|P_L|=|P_L^{-1}|$. Then we have
$$
\Cond(P_L)=||\,|P_L^{-1}|\,|P_L|\,||_\infty=||\,|P_L|\,|P_L^{-1}|\,||_\infty=\Cond(P_L^{-1}).
$$
By (\ref{entriesPLinv}) and (\ref{pascalq}), we can compute the Skeel condition number
\begin{align*}
&\Cond(P_L)=||\,|P_L^{-1}|\,|P_L|\,||_\infty=\max_{i=0,\dots,n}\sum_{j=0}^i\sum_{k=j}^i\dbinom{i}{k}\dbinom{k}{j}\\
&=\max_{i=0,\dots,n}\sum_{j=0}^i\sum_{k=j}^i\dbinom{i}{j}\dbinom{i-j}{k-j}
=\max_{i=0,\dots,n}\sum_{j=0}^i\dbinom{i}{j}2^{i-j}
=\max_{i=0,\dots,n}3^i=3^n.
\end{align*}\qed

\end{proof}
Let us observe that Theorem \ref{prop8} provides a lower bound for $\kappa_\infty(P_L)$ and $\kappa_\infty(L)$. In fact, by (\ref{condkappa}),
$$
3^n=\Cond(P_L)\leq \kappa_\infty(P_L),
$$
and, by (\ref{skeelcondition}), (\ref{lpl}) and (\ref{conditionnumber}), 
\begin{equation}\label{3n}
3^n=\Cond(P_L^{-1})=||\, |LD|\,|D^{-1}L^{-1}|\,||_\infty=\Cond(L^{-1})\leq \kappa_\infty(L^{-1})=\kappa_\infty(L).
\end{equation}
From (\ref{pascalq}) and (\ref{entriesPLinv}), it follows that $||P_L||_\infty=||P_L^{-1}||_\infty=2^n$. Then we can state the following known result (cf. Proposition 2 of \cite{PenaKhiarCarni}). Related inequalities can also be derived using the analysis of the spectral conditioning of a Pascal matrix given in page 520 of \cite{HighamBook}.
\begin{proposition}\label{pascal4n}
The $\infty$-norm condition number of the lower triangular Pascal matrix is $\kappa_\infty(P_L)=4^n$.
\end{proposition}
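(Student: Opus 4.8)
The plan is to evaluate the two factors in the definition (\ref{conditionnumber}) of $\kappa_\infty(P_L)=||P_L||_\infty\,||P_L^{-1}||_\infty$ separately, using the standard fact that the $\infty$-norm of a matrix equals its maximum absolute row sum.

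First I would compute $||P_L||_\infty$. By (\ref{pascalq}), $P_L$ is lower triangular with $q_{ij}=\binom{i}{j}$ for $j\le i$ and $q_{ij}=0$ otherwise, so the sum of the absolute values of the entries in row $i$ is $\sum_{j=0}^i\binom{i}{j}=2^i$. Since this quantity is increasing in $i$, the maximum over $i=0,\dots,n$ is attained at $i=n$, giving $||P_L||_\infty=2^n$. Next I would compute $||P_L^{-1}||_\infty$ in the same way: by (\ref{entriesPLinv}) the entries of $P_L^{-1}$ are $q_{ij}^{(-1)}=(-1)^{i+j}\binom{i}{j}$ for $j\le i$, hence $|P_L^{-1}|=|P_L|$, and the absolute row sums are again $2^i$, so $||P_L^{-1}||_\infty=2^n$ as well. (These are exactly the two identities quoted just before the statement, so one may alternatively take them as given.)

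Combining the two computations, $\kappa_\infty(P_L)=||P_L||_\infty\,||P_L^{-1}||_\infty=2^n\cdot 2^n=4^n$, which is the claim. There is essentially no obstacle: the only point to check is the monotonicity of the binomial row sums $i\mapsto 2^i$, which is immediate, and the sign cancellation that makes $|P_L^{-1}|=|P_L|$, which was already recorded in the proof of Theorem \ref{prop8}. If desired, one can close by remarking that this is consistent with the bound (\ref{condkappa}), since $\Cond(P_L)=3^n\le 4^n=\kappa_\infty(P_L)$.
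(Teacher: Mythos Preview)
Your proof is correct and follows exactly the paper's approach: the paper states just before the proposition that $||P_L||_\infty=||P_L^{-1}||_\infty=2^n$ (from (\ref{pascalq}) and (\ref{entriesPLinv})), and the result then follows immediately from the definition (\ref{conditionnumber}). You have simply spelled out the row-sum computations behind those two equalities.
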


%%%%
\section{Condition number of the collocation matrix of Newton basis}
\label{sec:4}
In this section we are going to study the behavior of the matrix $L$ at equidistant nodes given by (\ref{equidistant}), whose entries are computed in  (\ref{omegaeq}). This matrix is not invariant by affine transformations of the nodes and its entries depend on the length of the interval $\ell:=b-a$. 

Let us recall the upper incomplete gamma function
$$
\Gamma(a,x):=\int_x^{+\infty}t^{a-1}e^{-t}\, dt.
$$
From the definition, we deduce that 
$$
\Gamma(a,x)<\Gamma(a,0)=:\Gamma(a), \quad \forall x>0.
$$
It is well-known (see  formula 8.4.8 of \cite{Handbook})  that the incomplete gamma function gives an integral representation of the Taylor polynomial of the exponential function
\begin{equation}\label{l7}
\sum_{k=0}^n\frac{x^k}{k!}=e^x\frac{\Gamma(n+1,x)}{\Gamma(n+1)}=\frac{1}{n!}\int_0^{+\infty} e^{x-t}t^n\, dt.
\end{equation}

The following result gives us an inequality for this function.
\begin{lemma}\label{l8} If $\ell>1$, then
$$
\Gamma(n+1,\frac{n}{\ell})\geq \Gamma(n+1)-e^{-n/\ell}\Big(\frac{n}{\ell}\Big)^{n+1}
$$
and
$$
\lim_{n\rightarrow\infty}\frac{\Gamma(n+1,n/\ell)}{\Gamma(n+1)}=1.
$$
\end{lemma}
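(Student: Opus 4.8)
The plan is to start from the integral representation (\ref{l7}) with $x = n/\ell$. Writing $\Gamma(n+1, x)/\Gamma(n+1) = e^{-x}\sum_{k=0}^n x^k/k!$, the first inequality is equivalent to
$$
e^{-x}\sum_{k=0}^n \frac{x^k}{k!} \ge 1 - \frac{e^{-x}x^{n+1}}{n!},
$$
i.e. to $\sum_{k=0}^n x^k/k! \ge e^x - x^{n+1}/n!$, which in turn says $\sum_{k=n+1}^\infty x^k/k! \le x^{n+1}/n!$. So the whole first part reduces to the elementary estimate of the tail of the exponential series: for $x = n/\ell$ with $\ell > 1$ (hence $x < n$, and in particular $x/(n+1) < 1$), bound $\sum_{k \ge n+1} x^k/k!$ by a geometric series. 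Concretely, $x^k/k! = (x^{n+1}/(n+1)!)\prod_{j=n+2}^{k}(x/j) \le (x^{n+1}/(n+1)!)(x/(n+1))^{k-n-1}$, so the tail is at most $\frac{x^{n+1}}{(n+1)!}\cdot\frac{1}{1 - x/(n+1)} = \frac{x^{n+1}}{(n+1)!}\cdot\frac{n+1}{n+1-x}$. I then just need $\frac{n+1}{n+1-x} \le n+1$, i.e. $n+1-x \ge 1$, i.e. $x \le n$, which holds since $\ell > 1$ gives $x = n/\ell < n$. This yields the tail bound $x^{n+1}/n!$ and hence the first displayed inequality. (One should double-check the boundary/edge cases $n=0,1$, but the argument $x < n$ suffices for $n \ge 1$, and $n=0$ can be checked directly.)

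For the limit statement, the cleanest route is to combine the first inequality with the trivial upper bound $\Gamma(n+1, n/\ell) < \Gamma(n+1)$. This sandwiches the ratio:
$$
1 - \frac{e^{-n/\ell}(n/\ell)^{n+1}}{n!} \le \frac{\Gamma(n+1, n/\ell)}{\Gamma(n+1)} < 1,
$$
so it suffices to show $e^{-n/\ell}(n/\ell)^{n+1}/n! \to 0$. Setting $c = 1/\ell \in (0,1)$, the quantity is $e^{-cn}(cn)^{n+1}/n!$. Apply Stirling's formula $n! \sim \sqrt{2\pi n}\,(n/e)^n$: the expression behaves like $e^{-cn}(cn)^{n+1} e^n/(n^n\sqrt{2\pi n}) = cn\cdot c^n e^{n(1-c)}/\sqrt{2\pi n} = \frac{c\sqrt{n}}{\sqrt{2\pi}}\bigl(c e^{1-c}\bigr)^n$. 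Since $g(c) := c e^{1-c}$ satisfies $g(c) < 1$ for all $c \in (0,1)$ (as $g(1)=1$, $g'(c) = e^{1-c}(1-c) > 0$ on $(0,1)$, so $g$ is strictly increasing up to $g(1)=1$), the geometric factor $g(c)^n$ decays exponentially and dominates the polynomial factor $\sqrt{n}$; hence the whole term tends to $0$. Therefore the ratio tends to $1$.

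The only mildly delicate point is making the tail-of-the-exponential estimate clean enough that the geometric-series bound collapses exactly to $x^{n+1}/n!$ rather than some messier constant times it; the inequality $\ell > 1$ is exactly what makes $x = n/\ell < n$ and thus $\frac{n+1}{n+1-x} \le n+1$, so the hypothesis is used precisely here. I expect no real obstacle — this is a routine estimate — with the Stirling asymptotics for the limit being the one place where a little care with the dominant exponential factor $c e^{1-c} < 1$ is needed.
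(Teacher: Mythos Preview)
Your proof is correct; the overall architecture (prove the inequality, then sandwich the ratio and show the error term vanishes) matches the paper's. The technical ingredients differ in both steps, however.

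For the inequality, the paper works on the integral side: it writes $\Gamma(n+1)-\Gamma(n+1,n/\ell)=\int_0^{n/\ell} t^n e^{-t}\,dt$ and bounds the integrand by its value at the right endpoint, using that $t^n e^{-t}$ is increasing on $[0,n]$. You instead work on the series side, reducing the claim to the tail estimate $\sum_{k\ge n+1} x^k/k!\le x^{n+1}/n!$ and bounding that tail by a geometric series with ratio $x/(n+1)$; the hypothesis $\ell>1$ enters as $x=n/\ell\le n$, which is exactly what collapses the geometric factor $(n+1)/(n+1-x)$ to at most $n+1$. Both arguments are one-line once set up; the integral route is perhaps slightly slicker since it avoids summing the geometric series, while your series route makes the role of the hypothesis $x\le n$ more algebraically transparent.

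For the limit, the paper shows $c_n:=e^{-n/\ell}(n/\ell)^{n+1}/n!\to 0$ via the ratio test, computing $c_{n+1}/c_n\to \ell^{-1}e^{1-1/\ell}<1$. You instead apply Stirling directly to obtain $c_n\sim \tfrac{c\sqrt{n}}{\sqrt{2\pi}}\bigl(ce^{1-c}\bigr)^n$ with $c=1/\ell$. Both routes reduce to the same elementary fact that $g(c)=ce^{1-c}<1$ for $c\in(0,1)$, which you and the paper justify identically. The ratio-test version avoids invoking Stirling (which the paper reserves for later use in Theorem~\ref{t9}), but your version gives a sharper asymptotic for $c_n$ at no extra cost.
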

\begin{proof}
If $\ell>1$ we have
$$
0<\frac{\Gamma(n+1)-\Gamma(n+1,n/\ell)}{\Gamma(n+1)}=\int_0^{n/\ell}\frac{t^n}{n!}e^{-t}\, dt.
$$
The function $f(t)=t^ne^{-t}$ is increasing for $0\leq t\leq n$. So
\begin{equation*}
\frac{\Gamma(n+1)-\Gamma(n+1,n/\ell)}{\Gamma(n+1)}\leq\int_0^{n/\ell}\Big(\frac{n}{\ell}\Big)^n\frac{1}{n!}e^{-n/\ell}\, dt=\Big(\frac{n}{\ell}\Big)^{n+1}\frac{1}{n!}e^{-n/\ell},
\end{equation*}
and we deduce the inequality
$$
\Gamma(n+1,\frac{n}{\ell})\geq \Gamma(n+1)-e^{-n/\ell}\Big(\frac{n}{\ell}\Big)^{n+1}.
$$
Hence, in order to prove the result, it is sufficient to see that $(n/\ell)^{n+1}\frac{1}{n!}e^{-n/\ell}\to 0$ as $n\to\infty$. We denote by
$$
c_n:=\Big(\frac{n}{\ell}\Big)^{n+1}\frac{1}{n!}e^{-n/\ell}.
$$
We have
\begin{align*}
\frac{c_{n+1}}{c_n}&=\Big(\frac{n+1}{\ell}\Big)^{n+2}\frac{1}{(n+1)!}e^{-(n+1)/\ell}\Big(\frac{n}{\ell}\Big)^{-(n+1)}n!e^{n/\ell}\\
&=\Big(\frac{n+1}{n}\Big)^{n+1}\frac{n+1}{\ell}\frac{1}{n+1}e^{-1/\ell}\to \frac{1}{\ell}e^{1-1/\ell},\quad \text{ as } n\to\infty.
\end{align*}
Let us show that $  e^{1-1/\ell}/\ell<1$. Let be $g(x):=xe^{1-x}$. Then
$$
g'(x)=e^{1-x}(1-x)>0,\quad \text{ if } x<1.
$$
Hence, $g(x)$ is increasing for $x<1$, and thus
$$
\frac{1}{\ell}e^{1-1/\ell}=g\Big(\frac{1}{\ell}\Big)<g(1)=1.
$$
So, $\lim_{n\to\infty}c_{n+1}/c_n< 1$ and  $\lim_{n\to\infty} c_n=0$.\qed

\end{proof}

The following result provides $||L||_\infty$. 
\begin{proposition}\label{prop9}
Let $L$ be the lower triangular matrix given by (\ref{f19}) at equidistant nodes in $[a,b]$ given by (\ref{equidistant}) and let $\ell=b-a$. Then
$$
||L||_\infty=n!\Big(\frac{n}{\ell}\Big)^{-n}\sum_{k=0}^n\frac{1}{k!}\Big(\frac{n}{\ell}\Big)^{k}=\Big(\frac{n}{\ell}\Big)^{-n}e^{n/\ell}\Gamma\Big(n+1,\frac{n}{\ell}\Big).
$$
\end{proposition}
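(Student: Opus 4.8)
The plan is to compute $\|L\|_\infty$ directly from the explicit entries of $L$ at equidistant nodes. Since $L$ is lower triangular, the $i$-th row has entries $l_{ij}=\omega_j(x_i)=\bigl(\tfrac{b-a}{n}\bigr)^j\tfrac{i!}{(i-j)!}$ for $j=0,\dots,i$, all of which are nonnegative (the nodes are in increasing order, so $x_i-x_k\geq 0$ for $k<i$). Hence the $i$-th absolute row sum is simply $R_i:=\sum_{j=0}^i \bigl(\tfrac{\ell}{n}\bigr)^j\tfrac{i!}{(i-j)!}$, and $\|L\|_\infty=\max_{0\le i\le n}R_i$.

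The next step is to show that $R_i$ is increasing in $i$, so that the maximum is attained at $i=n$. Writing $t:=\ell/n$ and reindexing by $m=i-j$, one gets $R_i=i!\sum_{m=0}^i \tfrac{t^{\,i-m}}{m!}=i!\,t^i\sum_{m=0}^i\tfrac{t^{-m}}{m!}$. Comparing $R_{i+1}$ to $R_i$ term by term (matching the $m$-indexed summands and noting $(i+1)!\geq i!$, $t^{i+1}\cdot$extra-positive-term), one sees each term of $R_{i+1}$ dominates the corresponding term of $R_i$ and there is an extra nonnegative term; so $R_{i+1}\ge R_i$. Therefore $\|L\|_\infty=R_n=n!\sum_{j=0}^n\bigl(\tfrac{\ell}{n}\bigr)^j\tfrac{1}{(n-j)!}$. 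Substituting $k=n-j$ turns this into $n!\sum_{k=0}^n\tfrac{1}{k!}\bigl(\tfrac{\ell}{n}\bigr)^{n-k}=n!\bigl(\tfrac{n}{\ell}\bigr)^{-n}\sum_{k=0}^n\tfrac{1}{k!}\bigl(\tfrac{n}{\ell}\bigr)^{k}$, which is the first claimed expression.

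Finally, to obtain the second (closed) form, I invoke the incomplete-gamma identity \eqref{l7} with $x=n/\ell$: namely $\sum_{k=0}^n\tfrac{x^k}{k!}=e^x\tfrac{\Gamma(n+1,x)}{\Gamma(n+1)}$. Multiplying through by $n!\,x^{-n}=\Gamma(n+1)\bigl(\tfrac{n}{\ell}\bigr)^{-n}$ gives exactly $n!\bigl(\tfrac{n}{\ell}\bigr)^{-n}\sum_{k=0}^n\tfrac{1}{k!}\bigl(\tfrac{n}{\ell}\bigr)^{k}=\bigl(\tfrac{n}{\ell}\bigr)^{-n}e^{n/\ell}\,\Gamma\bigl(n+1,\tfrac{n}{\ell}\bigr)$, completing the proof. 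The only genuinely nonroutine point is the monotonicity of $R_i$ in $i$; the rest is bookkeeping with factorials and a direct appeal to \eqref{l7}. (A slicker alternative for the monotonicity: since the entries of $L$ are nonnegative and the map $i\mapsto l_{ij}$ is nondecreasing for each fixed $j$ with $l_{ij}=0$ extended for $j>i$, every row sum dominates the previous one.)
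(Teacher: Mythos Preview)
Your proof is correct and follows essentially the same route as the paper: compute the row sums from the explicit entries \eqref{omegaeq}, observe the maximum occurs at $i=n$, reindex, and apply \eqref{l7}. One caveat: your ``term-by-term'' comparison after reindexing by $m=i-j$ is not valid as stated, since the ratio of the $m$-th summands of $R_{i+1}$ and $R_i$ is $(i+1)t$, which can be $<1$ when $t=\ell/n<1/(i+1)$; however, your parenthetical ``slicker alternative'' (columnwise monotonicity $i\mapsto l_{ij}$) is exactly the paper's argument $|\omega_k(x_i)|\le|\omega_k(x_n)|$ and settles the point cleanly.
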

\begin{proof}
By (\ref{f19}), the $\infty$-norm of $L$ is
$$
||L||_\infty=\max_{i=0,\dots,n}\sum_{k=0}^i|\omega_k(x_i)|,
$$
and taking into account that
$$
|\omega_k(x_i)|\leq|\omega_k(x_n)|,\quad k=0,\dots,i,\quad i=0,\dots,n,
$$
we have that this maximum is achieved in $n$. Using (\ref{omegaeq}), we derive
\begin{equation}\label{f23}
||L||_\infty=\sum_{k=0}^n|\omega_k(x_n)|=n!\sum_{k=0}^n\frac{1}{(n-k)!}\Big(\frac{\ell}{n}\Big)^k.
\end{equation} 
By formula  (\ref{f23})
\begin{equation*}
||L||_\infty=n!\sum_{k=0}^n\frac{1}{(n-k)!}\Big(\frac{\ell}{n}\Big)^k
=n!\sum_{k=0}^n\frac{1}{k!}\Big(\frac{\ell}{n}\Big)^{n-k}=n!\Big(\frac{n}{\ell}\Big)^{-n}\sum_{k=0}^n\frac{1}{k!}\Big(\frac{n}{\ell}\Big)^{k}.
\end{equation*}
Using formula (\ref{l7}), we obtain the result.
\qed
\end{proof}
From (\ref{f23}), we deduce that the $\infty$-norm of $L$ is an increasing function of the interval length for a given value of $n$.

The computation of the $\infty$-norm of $L^{-1}$ has different cases depending on the interval length. For this purpose, we use the floor function
$$
\lfloor x \rfloor:=\max\{k\in \mathbb{Z} | k\leq x\}.
$$
\begin{proposition}\label{prop10}
Let $L$ be the lower triangular matrix given by (\ref{f19}) at equidistant nodes in $[a,b]$ given by (\ref{equidistant}) and let $\ell=b-a$. Then
\begin{equation*}
||L^{-1}||_\infty=\begin{cases}\frac{1}{n!}\Big(\frac{2n}{\ell}\Big)^n,& \text{ if } \ell\leq 2,\\
							\frac{1}{i_n!}\Big(\frac{2n}{\ell}\Big)^{i_n},\quad i_n=\lfloor \frac{2n}{\ell} \rfloor,& \text{ if }2\leq \ell \leq 2n,\\
							1,& \text{ if }  \ell\geq2n.\end{cases}
\end{equation*}
\end{proposition}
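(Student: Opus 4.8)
The plan is to reduce $\|L^{-1}\|_\infty$ to the maximum of a single explicit sequence and then locate that maximum by an elementary monotonicity argument. Since $L^{-1}$ is lower triangular, its $\infty$-norm is the largest absolute row sum, $\|L^{-1}\|_\infty=\max_{0\le i\le n}\sum_{j=0}^i|l_{ij}^{(-1)}|$. Inserting the entries from (\ref{omegaprimaeq}) and using $\sum_{j=0}^i\binom{i}{j}=2^i$ after factoring out $1/i!$, the $i$-th absolute row sum equals
\[
\Big(\frac{n}{\ell}\Big)^i\sum_{j=0}^i\frac{1}{j!\,(i-j)!}=\Big(\frac{n}{\ell}\Big)^i\frac{2^i}{i!}=\frac{1}{i!}\Big(\frac{2n}{\ell}\Big)^i=:a_i,
\]
so the task becomes the computation of $\max_{0\le i\le n}a_i$.

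Next I would study the sequence $(a_i)_{i\ge 0}$ through the ratio $a_{i+1}/a_i=\tfrac{2n/\ell}{i+1}$. Writing $t:=2n/\ell$, this ratio is $\ge 1$ exactly when $i+1\le t$ and $<1$ when $i+1>t$; hence $a_0\le a_1\le\cdots\le a_{\lfloor t\rfloor}$ and $a_{\lfloor t\rfloor}>a_{\lfloor t\rfloor+1}>\cdots$, so over all nonnegative integers $a_i$ is maximized at $i=\lfloor t\rfloor$. It then remains to intersect this with the admissible range $\{0,\dots,n\}$, which produces precisely the three cases of the statement: if $\ell\le 2$ then $t\ge n$, so $(a_i)$ is nondecreasing on $\{0,\dots,n\}$ and the maximum is $a_n=\tfrac{1}{n!}(2n/\ell)^n$; if $\ell\ge 2n$ then $t\le 1$, so $(a_i)$ is nonincreasing on $\{0,\dots,n\}$ and the maximum is $a_0=1$; and if $2\le\ell\le 2n$ then $1\le t\le n$, so $i_n:=\lfloor 2n/\ell\rfloor\in\{1,\dots,n\}$ lies in range and the maximum is $a_{i_n}=\tfrac{1}{i_n!}(2n/\ell)^{i_n}$.

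I do not expect a genuine obstacle here; the proof is essentially this monotonicity analysis. The only points requiring care are the bookkeeping at the threshold values $\ell=2$ and $\ell=2n$ and at integer values of $2n/\ell$, where $(a_i)$ has two consecutive equal terms and the three (overlapping) regimes must be checked to agree — which they do, since the common value is exactly the one given by each formula. A secondary routine check is the simplification of the absolute row sum via the binomial theorem, together with the correct handling of the alternating signs when passing to $|l_{ij}^{(-1)}|$ in (\ref{omegaprimaeq}).
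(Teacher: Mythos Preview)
Your proposal is correct and follows essentially the same route as the paper: compute the $i$-th absolute row sum from (\ref{omegaprimaeq}) to obtain $r_i=\tfrac{1}{i!}(2n/\ell)^i$, then use the ratio $r_{i+1}/r_i=\tfrac{2n}{(i+1)\ell}$ to split into the three monotonicity regimes. The paper's argument is the same, only slightly more terse at the boundaries you flagged.
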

\begin{proof}
By formula (\ref{inversaL}),
\begin{equation}\label{normainversaL}
||L^{-1}||_\infty= \max_{i=0,\dots,n}\left(\frac{n}{\ell}\right)^i\sum_{j=0}^i\frac{1}{j!(i-j)!}= \max_{i=0,\dots,n}\frac{1}{i!}\Big(\frac{2n}{\ell}\Big)^i.
\end{equation}
We define
$$
r_i:=\frac{1}{i!}\Big(\frac{2n}{\ell}\Big)^i,\quad i=0,\dots,n.
$$
Let us compute the maximum of the sequence $\{r_i\}_{i=0,\dots,n}$. We consider the quotient between two consecutive elements
$$
\frac{r_{i+1}}{r_i}=\frac{2n}{(i+1)\ell}.
$$
If $2/\ell\geq 1$  the sequence is increasing and then
	$$
	||L^{-1}||_\infty=r_n=\frac{1}{n!}\Big(\frac{2n}{\ell}\Big)^n.
	$$
	
If $2n/\ell\leq 1$, the sequence is decreasing and so
	$$
	||L^{-1}||_\infty=r_0=1.
	$$
	
Finally, if $1\leq2n/\ell\leq n$ the maximum is achieved at $i_n:=\lfloor \frac{2n}{\ell} \rfloor$, that is,
	$$
	||L^{-1}||_\infty=r_{i_n}=\frac{1}{i_n!}\Big(\frac{2n}{\ell}\Big)^{i_n}.
	$$\qed

\end{proof}

As a consequence of the previous propositions \ref{prop9} and \ref{prop10} we obtain the following result for $\kappa_\infty(L)$.
\begin{theorem}\label{prop11}
Let $L$ be the lower triangular matrix given by (\ref{f19}) at equidistant nodes in $[a,b]$ given by (\ref{equidistant}) and let $\ell=b-a$. Then
\begin{equation*}
\kappa_\infty(L)=\begin{cases}2^n\sum_{k=0}^n\frac{1}{k!}\Big(\frac{n}{\ell}\Big)^k,& \ell\leq 2,\\
							\frac{n!}{i_n!}2^{i_n}\Big(\frac{\ell}{n}\Big)^{n-i_n}\sum_{k=0}^n\frac{1}{k!}\Big(\frac{n}{\ell}\Big)^k,\quad i_n=\lfloor \frac{2n}{\ell} \rfloor,& 2\leq \ell \leq 2n,\\
							n!\Big(\frac{\ell}{n}\Big)^n\sum_{k=0}^n\frac{1}{k!}\Big(\frac{n}{\ell}\Big)^k,&  \ell\geq2n.\end{cases}
\end{equation*}
\end{theorem}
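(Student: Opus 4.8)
The plan is to obtain $\kappa_\infty(L)$ directly from its definition $\kappa_\infty(L)=\|L\|_\infty\,\|L^{-1}\|_\infty$ in~(\ref{conditionnumber}), feeding in the two exact norm computations already at hand: Proposition~\ref{prop9} for $\|L\|_\infty$ and Proposition~\ref{prop10} for $\|L^{-1}\|_\infty$. The first step is cosmetic but useful: rewrite the conclusion of Proposition~\ref{prop9} as
\[
\|L\|_\infty=n!\Big(\frac{\ell}{n}\Big)^{n}\,S,\qquad S:=\sum_{k=0}^n\frac1{k!}\Big(\frac{n}{\ell}\Big)^{k},
\]
so that the ``exponential-type'' sum $S$ is displayed as a single factor. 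This $S$ is exactly the sum appearing in all three branches of the claimed formula, and it will pass through every case untouched; only the prefactor $n!(\ell/n)^n$ interacts with $\|L^{-1}\|_\infty$.

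The second step is to multiply case by case according to the three regimes of $\ell$ coming from Proposition~\ref{prop10}. For $\ell\le 2$, multiplying by $\|L^{-1}\|_\infty=\tfrac1{n!}(2n/\ell)^n$ cancels the $n!$ and leaves $(\ell/n)^n(2n/\ell)^n=2^n$, giving the first branch. For $2\le\ell\le 2n$, multiplying by $\|L^{-1}\|_\infty=\tfrac1{i_n!}(2n/\ell)^{i_n}$ with $i_n=\lfloor 2n/\ell\rfloor$ and collecting exponents via $(\ell/n)^n(2n/\ell)^{i_n}=2^{i_n}(\ell/n)^{n-i_n}$ produces the factor $\tfrac{n!}{i_n!}2^{i_n}(\ell/n)^{n-i_n}$, i.e.\ the middle branch. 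For $\ell\ge 2n$, $\|L^{-1}\|_\infty=1$, so $\kappa_\infty(L)=\|L\|_\infty=n!(\ell/n)^n S$, which is the last branch.

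Finally I would check that the three pieces agree at the breakpoints $\ell=2$ and $\ell=2n$, as a sanity check rather than as part of the argument: at $\ell=2$ one has $i_n=n$, so the middle branch collapses to $2^n S$, matching the first; at $\ell=2n$ one has $i_n=1$, so the middle branch collapses to $n!\,2^n S=n!(\ell/n)^n S$, matching the last. There is no real obstacle in this proof: the substance is entirely contained in Propositions~\ref{prop9} and~\ref{prop10}, and what remains is careful bookkeeping of the powers of $2$, of $\ell/n$, and of the factorial ratio $n!/i_n!$. The only mild care needed is to remember that $i_n$ is only defined (and the middle formula only meaningful) on $2\le\ell\le 2n$, which is precisely the range where Proposition~\ref{prop10} places its middle case.
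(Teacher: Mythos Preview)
Your proposal is correct and follows exactly the paper's approach: the theorem is stated there simply as a consequence of Propositions~\ref{prop9} and~\ref{prop10}, with no proof beyond that remark, so multiplying the two norms case by case is precisely what is intended. Your algebraic bookkeeping and the breakpoint checks are accurate (in fact more detailed than what the paper provides).
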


Let us analyze some consequences of the previous result. For $\ell\leq2$, we have that $\kappa_\infty(L)$ is a decreasing function of the interval length. So, in this case, the lowest conditioning is attained at $\ell=2$ and its value can be bounded by
$$
\kappa_\infty(L)=2^n \sum_{k=0}^n\frac{1}{k!}\Big(\frac{n}{2}\Big)^k\leq (2\sqrt e)^n,\quad \ell=2.
$$
If  $\ell\geq 2n$ then $\kappa_\infty(L)$ is an increasing function of $\ell$ and its smallest value is obtained when $\ell=2n$
$$
\kappa_\infty(L)= n!2^n\sum_{k=0}^n\frac{1}{k!}2^{-k}\geq n!2^n, \quad \ell=2n.
$$
Since $n!\geq e^{n/2}$ for $n\geq3$, we have that  $\kappa_\infty(L)$ is higher for $\ell\geq 2n$ than for $\ell=2$. Furthermore, taking into account the growth more than exponential of the factorial, we show that the conditioning increases much more than in the case $\ell=2$. So, in order to have low condition number, we must take interval lengths satisfying $2\leq \ell \leq 2n$.

Table \ref{table1} shows the conditioning of the matrix $L$ at equidistant nodes for different values of $n$. We have analyzed $\kappa_\infty(L)$ in the intervals $[0,1]$, $[0,2]$ y $[0,3]$, with respective lengths 1, 2 and 3. We also see that the intervals of lengths 2  and 3 give better results than the standard interval $[0,1]$. 

% For tables use
\begin{table}
% table caption is above the table
\caption{$\kappa_\infty(L)$ at equidistant nodes in different intervals.}
\label{table1}       % Give a unique label
% For LaTeX tables use
\begin{tabular}{llll}
\hline\noalign{\smallskip}
n&$\kappa_\infty(L)$ on $[0,1]$&$\kappa_\infty(L)$ on $[0,2]$&$\kappa_\infty(L)$ on $[0,3]$ \\
\noalign{\smallskip}\hline\noalign{\smallskip}
3&104&$33.5$&32\\ 
4&$549.3333$&112&$101.2222$\\
5&$2.9253\times10^3$&$373.4583$&$302.7358$\\ 
9&$2.4370\times10^6$&$4.5301\times10^4$&$2.3969\times10^4$\\ 
14&$1.1239\times10^{10}$&$1.7865\times10^7$&$5.9094\times10^6$\\ 
19& $5.2459\times10^{13}$&$6.9906\times10^9$&$1.4329\times10^9$\\ 
\noalign{\smallskip}\hline
\end{tabular}
\end{table}

%%%%%%%%%%	
			
%%%%%%%%%%%%%%%

\section{Asymptotic analysis of condition number of the collocation matrix of Newton basis}
\label{sec:5}
In this section we want to analyze whether there exists an interval length $\ell=b-a$ such that the growth of $\kappa_\infty(L)$ is as small as possible. We will show that  $\lim_{n\to\infty}\kappa_\infty(L)^{1/n}$ is a constant and, by (\ref{3n}), this constant is greater than or equal to 3. Therefore,  $\kappa_\infty(L)$ presents an exponential growth. We will also show that the length corresponding to a minimum asymptotic growth rate is $ \ell=3$.

\begin{theorem}\label{t9}
Let $L$ be the lower triangular matrix given by (\ref{f19}) at equidistant nodes in $[a,b]$ given by (\ref{equidistant}) and let $\ell=b-a$. Then
\begin{equation*}
\lim_{n\to\infty} \kappa_\infty(L)^{1/n}=\begin{cases}\ell e^{3/\ell-1}, & \ell\geq2,\\
2e^{1/\ell}, & 1< \ell\leq2,\\
\frac{2e}{\ell}, & \ell\leq1.\end{cases}
\end{equation*}
The lowest value of the previous limit is $3$ for $\ell=3$ and we have
\begin{equation}\label{minrate}
\lim_{n\to\infty}\frac{ \kappa_\infty(L)}{3^n}=\sqrt\frac{3}{2},\quad \ell=3.
\end{equation}
\end{theorem}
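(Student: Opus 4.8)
The plan is to start from the closed form for $\kappa_\infty(L)$ given in Theorem~\ref{prop11}. We fix the interval length $\ell$ and let $n\to\infty$; then for $n$ large the branch $\ell\ge 2n$ never occurs, so only the first two cases of Theorem~\ref{prop11} are relevant. Every branch contains the common factor $\Sigma_n:=\sum_{k=0}^n\frac{1}{k!}(n/\ell)^k$, so the first step is to compute $\lim_{n\to\infty}\Sigma_n^{1/n}$. If $\ell>1$, formula~(\ref{l7}) gives $\Sigma_n=e^{n/\ell}\Gamma(n+1,n/\ell)/\Gamma(n+1)$ and Lemma~\ref{l8} shows the ratio tends to $1$, whence $\Sigma_n^{1/n}\to e^{1/\ell}$. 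If $\ell<1$, the terms $(n/\ell)^k/k!$ are increasing in $k$ for $0\le k\le n$ (because $n/\ell>n\ge k$), so $(n/\ell)^n/n!\le\Sigma_n\le (n+1)(n/\ell)^n/n!$, and combining with the Stirling estimate $(n!)^{1/n}\sim n/e$ gives $\Sigma_n^{1/n}\to e/\ell$.

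The two small-length branches then follow at once: for $\ell\le 1$, $\kappa_\infty(L)^{1/n}=2\,\Sigma_n^{1/n}\to 2e/\ell$; for $1<\ell\le 2$, $\kappa_\infty(L)^{1/n}=2\,\Sigma_n^{1/n}\to 2e^{1/\ell}$. For the main branch $\ell>2$ we have $\kappa_\infty(L)=\frac{n!}{i_n!}2^{i_n}(\ell/n)^{n-i_n}\Sigma_n$ with $i_n=\lfloor 2n/\ell\rfloor=2n/\ell+O(1)$. Put $\alpha:=2/\ell\in(0,1)$, so $i_n/n\to\alpha$ and $(n-i_n)/n\to 1-\alpha$. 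Stirling gives $(n!/i_n!)^{1/n}\sim n^{1-\alpha}\alpha^{-\alpha}e^{\alpha-1}$, whose growing factor $n^{1-\alpha}$ cancels exactly against the decaying factor $n^{-(1-\alpha)}$ produced by $(\ell/n)^{n-i_n}$; collecting the remaining constants yields
$$
\lim_{n\to\infty}\kappa_\infty(L)^{1/n}=\frac{2^{\alpha}}{\alpha^{\alpha}}\,\ell^{1-\alpha}\,e^{\alpha-1}\,e^{1/\ell}.
$$
Substituting $\alpha=2/\ell$ and using $2^{\alpha}/\alpha^{\alpha}=(2/\alpha)^{\alpha}=\ell^{\alpha}$ collapses the right-hand side to $\ell^{\alpha}\ell^{1-\alpha}e^{2/\ell-1}e^{1/\ell}=\ell\,e^{3/\ell-1}$, as asserted (and at $\ell=2$ this equals $2\sqrt e$, consistent with the previous branch).

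It remains to minimise the growth rate and to sharpen the bound at $\ell=3$. On $\ell\le 1$ the rate $2e/\ell$ decreases to $2e$ at $\ell=1$; on $1\le\ell\le 2$ the rate $2e^{1/\ell}$ decreases to $2\sqrt e$ at $\ell=2$; on $\ell\ge 2$, writing $\psi(\ell):=\ln(\ell e^{3/\ell-1})=\ln\ell+3/\ell-1$ we get $\psi'(\ell)=(\ell-3)/\ell^2$, so $\psi$ decreases on $[2,3]$ and increases on $[3,\infty)$, with $\psi(3)=\ln 3$, i.e. the rate equals $3$ at $\ell=3$. Since $3<2\sqrt e<2e$, the global minimum of $\lim_n\kappa_\infty(L)^{1/n}$ is $3$, attained at $\ell=3$. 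For the precise constant~(\ref{minrate}) we take $\ell=3$, write $i_n=\lfloor 2n/3\rfloor=2n/3-\delta_n$ with $\delta_n=\{2n/3\}\in\{0,1/3,2/3\}$ (so $\delta_n$ is bounded and constant on each residue class of $n$ modulo $3$), and use the sharp Stirling formula $m!=\sqrt{2\pi m}\,(m/e)^m(1+o(1))$ together with $\Sigma_n=e^{n/3}(1+o(1))$ from Lemma~\ref{l8}. A direct manipulation reduces $\kappa_\infty(L)/3^n$ to $\sqrt{n/i_n}\,(n/i_n)^{i_n}(2/3)^{i_n}e^{i_n-2n/3}(1+o(1))$; here $\sqrt{n/i_n}\to\sqrt{3/2}$, $e^{i_n-2n/3}=e^{-\delta_n}$, and $(n/i_n)^{i_n}=(3/2)^{i_n}e^{\delta_n}(1+o(1))$ because $i_n\log(n/i_n)-i_n\log(3/2)=\tfrac{3\delta_n}{2n}i_n+o(1)\to\delta_n$; the factors $(3/2)^{i_n}$ and $(2/3)^{i_n}$ cancel and so do $e^{\pm\delta_n}$, leaving $\sqrt{3/2}$. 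As the limit along each residue class is the same, $\lim_n\kappa_\infty(L)/3^n=\sqrt{3/2}$.

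The step I expect to be the main obstacle is the Stirling bookkeeping in the branch $\ell>2$: one must verify that the non-constant powers of $n$ arising from $n!/i_n!$ and from $(\ell/n)^{n-i_n}$ cancel exactly, and that the floor correction $\delta_n$ in $i_n=\lfloor 2n/\ell\rfloor$ is genuinely negligible for the $n$-th-root limit while still being tracked to first order for the refined identity~(\ref{minrate}).
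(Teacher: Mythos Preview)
Your proof is correct and follows essentially the same route as the paper's own argument: both start from Theorem~\ref{prop11}, invoke Lemma~\ref{l8} for the factor $\Sigma_n$ when $\ell>1$, handle small $\ell$ by squeezing $\Sigma_n$ against its dominant term, and carry out the Stirling bookkeeping for the branch $\ell\ge2$ and for the refined constant~(\ref{minrate}). The only cosmetic differences are that the paper bounds $\Sigma_n$ from above via a geometric series when $\ell<1$ (treating $\ell=1$ separately) instead of using monotonicity of the summands, and packages the cancellation $e^{i_n-2n/3}(2n/(3i_n))^{i_n}\to1$ via continuity of $x\mapsto(\log(1-x)+x)/x$ at $0$ rather than your direct first-order Taylor step; note that your monotonicity argument, stated for $\ell<1$, also covers the boundary $\ell=1$ once ``increasing'' is read as ``non-decreasing.''
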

\begin{proof}
If $\ell\geq2$ there exists a sufficiently large $n$ such that $2\leq \ell\leq 2n$. Let $i_n:=\lfloor \frac{2n}{\ell} \rfloor$. Using formula (\ref{l7}), we deduce from Theorem \ref{prop11} that
\begin{equation}\label{f27}
\kappa_\infty(L)=\ell^n \frac{n!n^{i_n}}{i_n!n^n}\Big(\frac{2}{\ell}\Big)^{i_n}e^{n/\ell}\frac{\Gamma(n+1,\frac{n}{\ell})}{n!}=\Big(\ell e^{1/\ell}\Big)^n\Big(\frac{2}{\ell}\Big)^{i_n}\frac{n!n^{i_n}}{i_n!n^n}\frac{\Gamma(n+1,\frac{n}{\ell})}{n!}.
\end{equation}
Since
\begin{equation}\label{inn}
\lim_{n\to\infty}\frac{i_n}{n}=\lim_{n\to\infty}\frac{\lfloor \frac{2n}{\ell} \rfloor}{n}=\frac{2}{\ell},
\end{equation}
we have
\begin{equation*}
\lim_{n\to\infty}\Big(\frac{n^{i_n}}{i_n!}\Big)^{1/n}=\lim_{n\to\infty}\Big(\frac{n^{i_n}}{e^{-i_n}\sqrt{2\pi i_n}i_n^{i_n}}\Big)^{1/n}=\lim_{n\to\infty}\Big(\frac{e}{i_n/n}\Big)^{i_n/n}=\Big(\frac{\ell e}{2}\Big)^{2/\ell}.
\end{equation*}
We also have
$$
\lim_{n\to\infty}\Big(\frac{n!}{n^n}\Big)^{1/n}=e^{-1}.
$$
Applying Lemma \ref{l8}, we deduce that
$$
\lim_{n\to\infty}\frac{\Gamma(n+1,\frac{n}{\ell})}{n!}=1.
$$
So, for $\ell\geq 2$
\begin{equation*}
\lim_{n\to\infty}\kappa_\infty(L)^{1/n}= \ell e^{1/\ell}\Big(\frac{2}{\ell}\Big)^{2/\ell}e^{-1}\Big(\frac{\ell e}{2}\Big)^{2/\ell}=\ell e^{3/\ell-1}.
\end{equation*}
If $\ell\leq 2$, we have by formula (\ref{l7}) and Theorem \ref{prop11}
\begin{equation*}
\kappa_\infty(L)=2^n\sum_{k=0}^n\frac{1}{k!}\Big(\frac{n}{\ell}\Big)^k=\Big(2e^{1/\ell}\Big)^n\frac{\Gamma(n+1,\frac{n}{\ell})}{n!}.
\end{equation*}
If $1<\ell\leq2$, we can use Lemma \ref{l8} and deduce that
\begin{equation*}
\lim_{n\to\infty}\kappa_\infty(L)^{1/n}=2e^{1/\ell}.
\end{equation*}

In the case $\ell<1$, we have 
\begin{align*}
\kappa_\infty(L)&=2^n\sum_{k=0}^n\frac{1}{k!}\Big(\frac{n}{\ell}\Big)^k=2^n\frac{n^n}{n!\ell^n}\Big(1+\frac{n\ell}{n}+\frac{n(n-1)\ell^2}{n^2}+\dots+\frac{n!\ell^n}{n^n}\Big)\\
&\leq 2^n\frac{n^n}{n!\ell^n}\Big(1+\ell+\dots+\ell^n\Big)=2^n\frac{n^n}{n!\ell^n}\frac{1-\ell^{n+1}}{1-\ell}\leq2^n\frac{n^n}{n!\ell^n}\frac{1}{1-\ell}.
\end{align*}
Taking the limit as $n\to\infty$ of the $n$-th root,
\begin{equation*}
\lim_{n\to\infty}\sup\kappa_\infty(L)^{1/n}\leq\lim_{n\to\infty}\Big(2^n\frac{n^n}{n!\ell^n}\frac{1}{1-\ell}\Big)^{1/n}=\frac{2}{\ell}\lim_{n\to\infty}\Big(\frac{n^n}{n!}\Big)^{1/n}=\frac{2e}{\ell}.
\end{equation*}

For $\ell=1$,
\begin{equation*}
\kappa_\infty(L)=2^n\sum_{k=0}^n\frac{n^k}{k!}=2^n\frac{n^n}{n!}\Big(1+\frac{n}{n}+\frac{n(n-1)}{n^2}+\dots+\frac{n!}{n^n}\Big)
\end{equation*}
and, since each term of the sum inside the brackets is less than or equal to 1, we have
\begin{equation*}
\lim_{n\to\infty}\sup\kappa_\infty(L)^{1/n}\leq\lim_{n\to\infty}\Big(2^n\frac{n^{n}(n+1)}{n!}\Big)^{1/n}=2e.
\end{equation*}
On the other hand,
$$
\kappa_\infty(L)=2^n\sum_{k=0}^n\frac{1}{k!}\Big(\frac{n}{\ell}\Big)^k\geq\Big(\frac{2n}{\ell}\Big)^n\frac{1}{n!},\quad \ell\leq1.
$$
Hence, for $\ell\leq 1$
$$
\lim_{n\to\infty}\inf\kappa_\infty(L)^{1/n}\geq\lim_{n\to\infty}\Big[\Big(\frac{2n}{\ell}\Big)^n\frac{1}{n!}\Big]^{1/n}=\frac{2}{\ell}\lim_{n\to\infty}\Big(\frac{n^n}{n!}\Big)^{1/n}=\frac{2e}{\ell}.
$$
Therefore
$$
\lim_{n\to\infty}\kappa_\infty(L)^{1/n}=\frac{2e}{\ell}.
$$

Let us observe that, for $\ell\leq2$, $\lim_{n\to\infty}\kappa_\infty(L)^{1/n}$ is a decreasing function of $\ell$. Since the function $ \ell e^{3/\ell-1}$ attains its minimum at $\ell=3$, we have the lowest exponential growth for $\ell=3$. In this case, using formula (\ref{f27}) with $\ell=3$, Lemma \ref{l8} and the Stirling's formula (see formula 5.11.7 of \cite{Handbook}), we obtain

\begin{align*}
&\lim_{n\to\infty}\frac{\kappa_\infty(L)}{3^n}=\lim_{n\to\infty}\frac{1}{3^n}\frac{3^nn!n^{i_n}}{i_n!n^n}\Big(\frac{2}{3}\Big)^{i_n} \frac{\Gamma(n+1,\frac{n}{3})}{n!}e^{n/3}\\
&=\lim_{n\to\infty}\frac{n^ne^{-n}\sqrt{2\pi n}n^{i_n}}{i_n^{i_n}e^{-i_n}\sqrt{2\pi i_n}n^n}\Big(\frac{2}{3}\Big)^{i_n} \frac{\Gamma(n+1,\frac{n}{3})}{n!}e^{n/3}=\lim_{n\to\infty}\Big(\frac{2n}{3i_n}\Big)^{i_n}e^{i_n-2n/3}\sqrt{\frac{n}{i_n}}.
\end{align*}
By formula (\ref{inn}), we have $i_n/n\to2/3$, as $ n\to\infty$. Let us denote by 
$$
s_n:=e^{i_n-2n/3}\Big(\frac{2n}{3i_n}\Big)^{i_n}
$$
and show that $\lim_{n\to\infty}s_n=1$ or, equivalently, $\lim_{n\to\infty}\log s_n=0$. We use the following property
$$
\lim_{x\to0}\frac{\log(1-x)+x}{x}=0.
$$
to derive that
$$
f(x):=\begin{cases}\frac{\log(1-x)+x}{x}, & x\neq0,\\
0, &x=0,
\end{cases}
$$
is continuous in $x=0$. Since $i_n-2n/3$ is bounded, we have
\begin{align*}
\lim_{n\to\infty}\log s_n&=\lim_{n\to\infty}\left(i_n-\frac{2n}{3}\right)+i_n\log\left(\frac{2n}{3i_n}\right)\\
&=\lim_{n\to\infty} f\left(1-\frac{2n}{3i_n}\right)\left(i_n-\frac{2n}{3}\right)=0.
\end{align*}
Hence, 
$$
\lim_{n\to\infty}\frac{\kappa_\infty(L)}{3^n}=\lim_{n\to\infty}s_n\lim_{n\to\infty}\sqrt{\frac{n}{i_n}}=\sqrt{\frac{3}{2}}.
$$
\qed
\end{proof}

Observe that for $\ell\leq 1$, the lowest value of  $\lim_{n\to\infty}\kappa_\infty(L)^{1/n}$  is attained in $\ell=1$ and this limit is $2e\approx 5.4366$. If $1\leq \ell\leq2$, the smallest value is $2\sqrt e\approx 3.2974$ for $\ell=2$. Finally, for $\ell\geq 2$ we have that the minimum is 3 for $\ell=3$. 

Let us compare $\kappa_\infty(L)^{1/n}$ and $\kappa_\infty(P_L)^{1/n}$ for different interval lengths. For  $1/\log2\leq\ell\leq 2$, we have
$$
\kappa_\infty^{1/n}(L)=2e^{1/\ell}\leq 4=\kappa_\infty(P_L)^{1/n}.
$$
If $\ell\geq2$ we observe that $\lim_{n\to\infty}\kappa_\infty(L)^{1/n}\leq\lim_{n\to\infty}\kappa_\infty(P_L)^{1/n}$ if and only if
$$
\ell e^{3/\ell-1}\leq 4.
$$
This inequality holds for lengths between 2 and approximately 7.1451. If $\ell\leq1$, $\lim_{n\to\infty}\kappa_\infty(L)^{1/n}=2e/\ell\geq 4=\kappa_\infty(P_L)^{1/n}$, that is, $L$ has worse asymptotic behavior than $P_L$.

We have proved that the minimum asymptotic growth rate of $\kappa_\infty(L)$ is achieved in intervals of length 3. In order to take advantage of the good properties of the intervals of length 3, we can perform an affine change of variables from the working interval $[a,b]$ to an interval of length 3 and compute the divided differences, $d_i^{(3)}f$, with respect to the  transformed nodes in  the  interval of length 3. In this case, divided differences are rescaled 
$$
d_i^{(3)}f=\Big(\frac{b-a}{3}\Big)^id_if,\quad i=0,\dots,n,
$$
which implies the following rescaling of the Newton basis
$$
\omega_i^{(3)}(x):=\Big(\frac{3}{b-a}\Big)^i\omega_i(x),\quad i=0,\dots,n.
$$
With these normalizations we gain stability in the processes of getting divided differences from data and recovering data  from the divided differences.

The matrix interpretation of this procedure is that the collocation matrix $L$ has to be replaced by the matrix $L^{(3)}=(\omega_j^{(3)}(x_i))_{i,j=0,\dots,n}$. Both matrices are related by
$$
L^{(3)}=L\diag\left(1,\frac{3}{b-a},\dots,\Big(\frac{3}{b-a}\Big)^n\right).
$$
By Theorem  \ref{t9}, the asymptotic condition number of $L^{(3)}$ is given by
$$
\kappa_\infty(L^{(3)})\sim \sqrt{\frac{3}{2}}\,3^n,
$$
providing a more stable alternative than finite differences.

%

% BibTeX users please use one of
%\bibliographystyle{spbasic}      % basic style, author-year citations
%\bibliographystyle{spmpsci}      % mathematics and physical sciences
%\bibliographystyle{spphys}       % APS-like style for physics
%\bibliography{}   % name your BibTeX data base

% Non-BibTeX users please use

\end{document}